\newtheorem{theorem}{Theorem}[section]
\newtheorem{lemma}[theorem]{Lemma}
\newtheorem{definition}[theorem]{Definition}
\newtheorem{prop}{Proposition}[section]
\newtheorem{remark}[prop]{Remark}
\makeatletter \@addtoreset{equation}{section} \makeatother
\newcommand{\h}{\textbf{h}}
\begin{document}

\title[]{The Gauss Map of a Free Boundary Minimal Surface}
\author{Hung Tran}
\address{Department of Mathematics and Statistics,
 Texas Tech University, Lubbock, TX 79409}
\email{hung.tran@ttu.edu}

\renewcommand{\subjclassname}{%
  \textup{2000} Mathematics Subject Classification}
\subjclass[2000]{Primary 49Q05}
\date{}
\begin{abstract} In this paper, we study the Gauss map of a free boundary minimal surface. The main theorem asserts that if components of the Gauss map are eigenfunctions of the Jacobi-Steklov operator, then the surface must be rotationally symmetric.     

\end{abstract}
\maketitle
\tableofcontents

\section{Introduction}
The goal of this paper is to study the Gauss map of a free boundary minimal surface (FBMS). Let $\Sigma_0$ be an abstract surface which is properly immersed in the unit Euclidean ball $\mathbb{B}^3$. The immersed surface, denoted by $\Sigma$, is a FBMS if its mean curvature vanishes and $\partial \Sigma$ meets $\partial \mathbb{B}^n$ perpendicularly. Equivalently, a FBMS is a critical point of the area functional among all surfaces with boundaries on $\partial \Omega$. 
For example, the only rotationally symmetric embedded smooth FBMS are equatorial planes and critical catenoids (appropriate portions of catenoids which meet the boundary sphere perpendicularly). In higher dimensions, we also consider cones over minimal submanifolds of the boundary sphere. 

The subject of FBMS has an extensive literature much of which has been devoted to study existence and regularity problems (see \cite{DHTKv2} for an excellent survey and \cite{FS16, MNS16, Li_free15, LZminmax16, KWtriple17} for recent results). In particular, for regularity, if $\partial \Sigma$ is real analytic then $\Sigma$ can be continued analytically across the boundary. The surface may still develop some singular set as demonstrated by the minimal cone example. For existence, the work of A. Fraser and R. Schoen shows there is a FBMS with genus zero and any number of boundary components by exploiting an intriguing spectral geometry connection \cite{FS16}. 

Additionally, more attention is directed towards a better understanding of the stability of these critical objects through the second variation. As a consequence, there have been several attempts to compute and estimate the Morse index which intuitively gives the number of distinct admissible deformations which decrease the area to second order
(\cite{Sargent16, ACS16, devyver16index, SZ16index, tran16index, SSTZ17morse}). In particular, in \cite{tran16index}, the author develops a new and natural method to compute the index from data of two simpler problems: the fixed boundary problem and the Jacobi-Steklov problem (Steklov eigenvalues associated with the Jacobi operator). For surfaces which are star-shaped (that applies to the family constructed by Fraser and Schoen and all known explicit examples), the former is well-known and only the latter is non-trivial. 

Generally, the Jacobi-Steklov problem is intrinsically related to the Gauss map since a normal vector field is a Jacobi field. In this paper, we will explore their relationship. First, consider the critical catenoid parametrized by, for an appropriate constant $c$,
\[X(t,\theta)=c(\cosh{t}\cos{\theta}, \cosh{t}\sin{\theta}, t).\]
A normal vector field is given by 
\[\nu=\frac{1}{\cosh{t}}(\cos{\theta}, \sin{\theta},-\sinh{t}).\]
It is straightforward to check that each normal component is an eigenfunction of the Jacobi-Steklov problem (see \cite{tran16index} for details). Nevertheless, it is noted that normal components are extrinsic quantities which depend on the choice of the immersion. For instance, rotating the critical catenoid above by an angle $\alpha$ in the $yz$-plane yields another critical catenoid:
\[ X^\alpha(t, \theta)= c(\cosh{t}\cos{\theta}, \cosh{t}\sin{\theta}\cos{\alpha}-t\sin{\alpha}, \cosh{t}\sin{\theta}\sin{\alpha}+ct\cos{\alpha}).\]
The normal vector $\nu^\alpha$ is obtained by rotating $\nu$ by the same angle
\[\nu^\alpha =\frac{1}{\cosh{t}}(\cos{\theta}, \sinh{t}\sin{\alpha}+\sin{\theta}\cos{\alpha}, \sin{\theta}\sin{\alpha}-\sinh{t}\cos{\alpha}).\]
For this catenoid, however, only certain linear combinations of those normal components are eigenfunctions. Our main theorem asserts that property characterize critical catenoids among non-equatorial FBMS. 
\begin{theorem}
\label{mainthm1}
Let $\Sigma\subset \mathbb{B}^3$ be a smooth, properly immersed FBMS such that the span of normal components is an eigen-subspace of the Jacobi-Steklov problem. Then $\Sigma$ must be rotationally symmetric. 
\end{theorem}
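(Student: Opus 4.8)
The plan is to exploit the fact that on a free boundary minimal surface, the coordinate functions of the Gauss map (the components of the unit normal $\nu$) are Jacobi fields, i.e., they satisfy $L\nu_i = 0$ where $L = \Delta_\Sigma + |A|^2$ is the Jacobi operator. First I would set up the Jacobi--Steklov problem precisely: an eigenfunction $u$ satisfies $Lu = 0$ in the interior together with the Steklov-type boundary condition $\partial_\eta u = \sigma\, u$ on $\partial\Sigma$, where $\eta$ is the outward conormal and $\sigma$ is the eigenvalue. The hypothesis is that the three-dimensional span $V = \mathrm{span}\{\nu_1,\nu_2,\nu_3\}$ is an eigen-subspace, so there is a single $\sigma$ for which \emph{every} linear combination $\sum a_i \nu_i$ is an eigenfunction. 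Since each $\nu_i$ already solves the interior equation, the real content of the hypothesis is the boundary condition: for all choices of $a_i$, one has $\partial_\eta(a\cdot\nu) = \sigma\,(a\cdot\nu)$ on $\partial\Sigma$, and because this holds for every constant vector $a$, it must hold componentwise, giving the vector identity
\begin{equation}
\partial_\eta \nu = \sigma\, \nu \qquad \text{on } \partial\Sigma.
\end{equation}

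Next I would interpret this boundary identity geometrically. The free boundary condition says $\partial\Sigma$ meets $\partial\mathbb{B}^3$ orthogonally, so along $\partial\Sigma$ the position vector $X$ is the outward unit conormal $\eta$, and $\nu$ is tangent to the sphere, hence $\nu\cdot X = 0$ on $\partial\Sigma$. Differentiating geometric relations of this kind and using the Weingarten map, the normal derivative $\partial_\eta\nu$ can be rewritten in terms of the shape operator $A$ applied to $\eta$, namely $\partial_\eta\nu = -A(\eta) + (\text{tangential-to-sphere terms})$. Combining this with the eigen-identity forces an algebraic relation, at each boundary point, between $\sigma$, the principal curvatures, and the direction of $\eta$ relative to the principal frame. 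The key step is to show that this relation, holding along the entire curve $\partial\Sigma$ with a \emph{single} constant $\sigma$, pins down the second fundamental form rigidly; concretely I expect it to force $\eta$ to be a principal direction with a fixed principal curvature all along $\partial\Sigma$, and to constrain the other principal curvature through the minimality condition $\kappa_1 + \kappa_2 = 0$.

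From this rigid boundary data I would then propagate the symmetry into the interior. Because $\partial\Sigma$ is real analytic (a free boundary minimal surface extends analytically across its boundary, as noted in the introduction), it suffices to establish rotational symmetry to infinite order along $\partial\Sigma$ and invoke analyticity, or alternatively to construct a Killing field of $\mathbb{R}^3$ that is everywhere tangent to $\Sigma$. The natural candidate is the rotation field $R = e_3 \times X$ (or about whichever axis the boundary data singles out); I would verify that the boundary identity $\partial_\eta\nu = \sigma\nu$ implies $\langle R, \nu\rangle$ and its normal derivative both vanish on $\partial\Sigma$, and that $\langle R, \nu\rangle$ is itself a Jacobi field. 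A Jacobi field that vanishes to second order along a curve, by a unique-continuation argument for the elliptic operator $L$, must vanish identically, which says exactly that the rotation field is tangent to $\Sigma$ everywhere, i.e. $\Sigma$ is rotationally symmetric.

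I expect the main obstacle to be the middle step: converting the single boundary identity $\partial_\eta\nu = \sigma\nu$ into a statement strong enough to identify an axis of symmetry, since a priori this is one scalar constraint per boundary point and one must extract from it both the correct Killing field and the vanishing of $\langle R,\nu\rangle$ to the required order. The delicate point is that $\sigma$ is only assumed constant, not known, so the argument must treat $\sigma$ as an unknown and show that consistency along the closed boundary curve—together with minimality and the free boundary (orthogonality) condition—leaves no freedom except the rotationally symmetric configurations; handling possible multiple boundary components and ruling out the degenerate equatorial case (where the Gauss map is constant and the hypothesis is vacuous) will require separate care.
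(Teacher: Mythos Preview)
Your proposal rests on a misreading of the hypothesis. In the paper, ``eigen-subspace'' does \emph{not} mean that the whole span lies in a single eigenspace with one common eigenvalue; it means only that $V=\mathrm{span}\{\nu_1,\nu_2,\nu_3\}$ admits \emph{some} basis of Jacobi--Steklov eigenfunctions, and the eigenvalues $\lambda_1,\lambda_2,\lambda_3$ attached to that basis may all be different. This distinction is not cosmetic: for the critical catenoid the normal components are eigenfunctions with genuinely distinct eigenvalues, so any argument that begins by positing a single $\sigma$ cannot capture the catenoid.

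In fact your vector identity $\partial_\eta\nu=\sigma\,\nu$ collapses immediately. Along $\partial\Sigma$ one has $\eta=X$ and $\partial_\eta\nu_i=-\h(\eta,\eta)X_i$, so your equation reads $-\h(\eta,\eta)\,X=\sigma\,\nu$. Dotting with $X$ gives $\h(\eta,\eta)=0$ (since $|X|=1$ and $X\cdot\nu=0$), and then dotting with $\nu$ gives $\sigma=0$. Thus your hypothesis forces $f:=-\h(\eta,\eta)\equiv 0$ on the boundary, which by analytic continuation makes $\Sigma$ a cone over a boundary geodesic, i.e.\ an equatorial disk. You have recovered the paper's Lemma~\ref{equalthen0} and the cone case of Lemma~\ref{onelambdaiszero}, but nothing more; the entire non-trivial content of the theorem---the catenoid---is invisible to this line of attack.

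The paper proceeds quite differently. First it reduces Theorem~\ref{mainthm1} to Theorem~\ref{mainthm} by a linear-algebra step: the index form $S$ defines a symmetric negative-definite matrix $M$ on $V$, and the eigenbasis $\{u_i\}$ of the Steklov problem diagonalises $M$; hence the vectors $a_i$ with $u_i=\nu\cdot a_i$ are orthonormal, and after rotating coordinates one may assume each $\nu_i$ itself is an eigenfunction with its own $\lambda_i$. The heart of the argument is then Lemma~\ref{nolambdaiszero}: from $fX_i=\lambda_i\nu_i$ one deduces that the image of $\partial\Sigma$ under $F(s)=(s_1^2,s_2^2,s_3^2)$ lies on a line segment, computes the curvature of such curves explicitly (Lemma~\ref{computecurvature}), and compares it with the independent expression $\kappa^2=1+f^2$ coming from the free boundary geometry to force one $X_i$ to be constant on each boundary component. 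Your Killing-field/unique-continuation idea never enters.
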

\begin{remark} For the precise definition of those concepts, see Section \ref{notation}.
\end{remark}
We have the following observations. 
\begin{itemize}
\item If we further assume that $\Sigma$ is embedded then it must be either an equatorial disk or a critical catenoid.
\item It is interesting to relate to the theory of minimal surfaces in a sphere. In that case, components of a normal vector of any minimal surface are always eigenfunctions of the corresponding Jacobi operator. 
\item The results are also relevant to a well-known conjecture which says that an embedded FBMS with index of 4 must be the critical catenoid. By \cite{tran16index}, for such an FBMS, the eigen-subspace with eigenvalues smaller than 1 of the Jacobi-Steklov problem is precisely of dimension 3. Furthermore, the associated index form is also negative definite on the span of normal components which is of dimension 3 for non-equatorial surfaces. In relation, Theorem \ref{mainthm1} is based on a slightly stronger assumption that these subspaces coincide. 
\end{itemize}

The organization of the paper is as follows. Section \ref{notation} collects preliminaries including a description of the Jacobi-Steklov problem and a re-formalization as a problem on a spherical domain, which might be of independent interests. Then, in Section \ref{proofmainthm}, we give a proof of Theorem \ref{mainthm1}. In order to do that, we first derive the result under a stronger assumption: namely each normal component is an eigen-function (see Theorem \ref{mainthm}). In that case, recall that a normal vector is a Jacobi field. Thus, when its components are eigenfunctions, they must satisfy further equations along the boundary. Hence, the proof of Theorem \ref{mainthm} is based on a careful analysis of these equations and their consequences. For Theorem \ref{mainthm1}, we use a linear algebra argument to reduce to the case of Theorem \ref{mainthm}. Finally, the Appendix collects a straightforward but crucial computation used in the proof. 

{\bf Acknowledgments:} The author would like to thank Richard Schoen for extensive discussion. The author has also benefited greatly from conversations with Xiaodong Cao, Magdalena Toda, and David Wiygul. 
\section{Notation and Preliminaries}\label{notation}
This section collects notation and preliminary results. Let $\mathbb{B}^3$ be the ball of radius 1 around the origin in $\mathbb{R}^3$ and $\mathbb{S}^2:= \partial \mathbb{B}^3$ is the boundary sphere. Let $(\Sigma_0,g)$ be a smooth abstract surface with boundary. $\Sigma=X(\Sigma_0)$ is an isometric immersion of $\Sigma_0$ in $\mathbb{B}^3$ such that $\partial \Sigma=\Sigma \cap \partial \mathbb{B}^3$. When the context is clear, we identify the image with the surface itself. $\nu:\Sigma \mapsto\mathbb{S}^2$ is a choice of a smooth section of the normal bundle of $\Sigma \subset \mathbb{R}^3$. If $\Sigma$ is non-equatorial then 
$\nu$ is chosen so that \[\zeta:=X \cdot \nu\] is positive at some point. For a choice of normal vector $\nu$, the 2nd fundamental form is defined as, for orthogonal unit tangent vectors $T_i, T_j$,
\[\h_{ij}:=\h(T_i, T_j)=D_{T_i}T_j \cdot \nu=-D_{T_i}\nu \cdot T_j.\]
Here $D_{-}(-)$ is the covariant derivative and ($\cdot$) denotes the inner product in the ambient Euclidean space. Then $|\h|$ denotes the norm. Let $\{e_1,...,e_{3}\}$ be an orthonormal basis of $\mathbb{B}^{3}$ and 
\begin{align*}
X_i &:=X\cdot e_i,\\
\nu_i&:=\nu \cdot e_i.
\end{align*}
Finally, $\eta$ denotes the outward conormal vector along the boundary of a surface. It is noted that if $\Sigma$ is an FBMS in the Euclidean ball around the origin then $\eta\equiv X$ along the boundary.  

\subsection{Jacobi-Steklov Problem}
For $\Delta$ denoting the Laplacian operator, let \[J:= \Delta+|\h|^2\] denote the Jacobi operator. For an FBMS in the ball, the index form associated with the second variation of the area functional is given by
\begin{align}
\label{2ndvarAREA}
{S}(f,h) &:= \int_\Sigma (DfDh-|\h|^2 fh)d\mu -\int_{\partial \Sigma} fh da\\
&=-\int_\Sigma fJh d\mu +\int_{\partial \Sigma} f(D_{\eta}h-h) da \nonumber.
\end{align}

Following \cite{tran16index}, we consider the Jacobi-Steklov problem. Given a function $\hat{f}\in C^{\infty} (\partial \Sigma)$, consider the Jacobi extension of $\hat{f}$:
\[
\Bigg\{ 
\begin{tabular}{cc}
$J f =0$ & \text{ on $\Sigma$},\\
${f}=\hat{f}$ & \text{on $\partial \Sigma$}.
\end{tabular}
\]
It is well-known that an extension exists if and only if $\hat{f}$ is $L^2$-perpendicular to the kernel of $J$ with Dirichlet data. For that domain (see \cite[Section 2]{tran16index} for more details), the Dirichlet-to-Neumann map is defined 
\begin{equation}\label{jacobiSteklov}
 L_{J} \hat{f}=D_{\eta} {f}.
 \end{equation}
It turns out that $L_{J}$ has a discrete spectrum tending to infinity
\begin{definition}
A function $u\in C^{\infty}(\Sigma)$ is said to be an eigenfunction with eigenvalue $\lambda$ of (\ref{jacobiSteklov}) if the following holds:
\begin{equation*}
\begin{cases}
Ju &=0,\\
D_\eta u &= \lambda u.
\end{cases}
\end{equation*}
\end{definition}
\begin{definition}
A subspace $W\subset C^{\infty}(\Sigma)$ is said to be an eigen-subspace of (\ref{jacobiSteklov}) if there is a basis consisting of eigenfunctions of (\ref{jacobiSteklov}).  
\end{definition} 
\begin{lemma}\label{choosebasis}
A finite-dimensional eigen-subspace has a basis $\{u_1,...,u_m\}$ such that, for $i\neq j$, 
\[S(u_i, u_j)=0.\]
\end{lemma}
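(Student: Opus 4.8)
The plan is to exploit the boundary-integral expression for $S$ together with the symmetry of $S$ and the defining equations of eigenfunctions, thereby reducing the lemma to diagonalizing a single symmetric bilinear form on the boundary. First I would record one computation. Suppose $u,v$ are eigenfunctions with eigenvalues $\lambda_u,\lambda_v$. Since $Jv=0$ and $D_\eta v=\lambda_v v$ on $\partial\Sigma$, the identity $S(f,h)=-\int_\Sigma fJh\,d\mu+\int_{\partial\Sigma}f(D_\eta h-h)\,da$ collapses to
\[ S(u,v)=(\lambda_v-1)\int_{\partial\Sigma}uv\,da. \]
Writing $Q(u,v):=\int_{\partial\Sigma}uv\,da$ for the $L^2(\partial\Sigma)$ pairing, which is a symmetric bilinear form, this reads $S(u,v)=(\lambda_v-1)Q(u,v)$.

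Next I would invoke the manifest symmetry of $S$, visible from its first expression $\int_\Sigma(DfDh-|\h|^2fh)\,d\mu-\int_{\partial\Sigma}fh\,da$. Equating $S(u,v)=S(v,u)$ yields $(\lambda_u-\lambda_v)Q(u,v)=0$; hence eigenfunctions with distinct eigenvalues are $Q$-orthogonal, and therefore $S$-orthogonal. This separates the problem by eigenvalue. Grouping the given eigenbasis by the distinct eigenvalues $\lambda^{(1)},\dots,\lambda^{(r)}$ occurring in $W$, let $W_k$ be the span of the basis eigenfunctions with eigenvalue $\lambda^{(k)}$; each $W_k$ consists of eigenfunctions, since a linear combination of eigenfunctions sharing an eigenvalue is again such an eigenfunction. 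By the previous step $S$ vanishes across distinct blocks, so it suffices to choose within each $W_k$ a basis of eigenfunctions on which $S$ is diagonal.

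On $W_k$ one has exactly $S=(\lambda^{(k)}-1)Q$, a scalar multiple of the symmetric form $Q$. Applying the standard orthogonal diagonalization of a real symmetric bilinear form to $Q|_{W_k}$ produces a basis of $W_k$ that is $Q$-orthogonal, hence $S$-orthogonal, and whose members are still eigenfunctions with eigenvalue $\lambda^{(k)}$. Concatenating these blockwise bases yields the required $\{u_1,\dots,u_m\}$. The argument is purely linear-algebraic once the computation above is in hand, so I expect no genuine analytic obstacle; the only point deserving care is the symmetry step that forces the cross-eigenvalue terms to vanish, together with the observation that the within-block diagonalization needs only that $Q$ is symmetric, so no positive-definiteness of $Q$—and in particular no unique-continuation input—is required.
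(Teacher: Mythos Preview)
Your proof is correct and follows essentially the same route as the paper: compute $S(u,v)=(\lambda_v-1)\int_{\partial\Sigma}uv\,da$, use the symmetry of $S$ to kill cross-eigenvalue terms, and then diagonalize within each eigenvalue block. The paper phrases the last step as ``Gram--Schmidt'' while you invoke diagonalization of a symmetric bilinear form, but the content is identical.
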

\begin{proof}
We note that if $u_i, u_j$ are eigenfuntions $D_\eta u_i=\lambda_i u_i$, then
\begin{align*}
S(u_i,u_j) &= -\int_{\Sigma} u_i J u_j d\mu+\int_{\partial\Sigma}u_i (D_\eta u_j-u_j)da,\\
&=(\lambda_j-1)\int_{\partial\Sigma}u_iu_jda\\
&=(\lambda_i-1)\int_{\partial\Sigma}u_iu_jda.
\end{align*} 
So if $\lambda_i\neq \lambda_j$ then, clearly, $S(u_i,u_j)=0$. If $\lambda_i=\lambda_j$ then, by the Gram-Schmidt process, the result also follows. 
\end{proof}

\subsection{The Gauss Map} The Gauss map $\nu:\Sigma \mapsto\mathbb{S}^2$ is an immersion at every point where $|\h|\neq 0$. Let $V:=\text{span}(\nu_1,...,\nu_3)\subset C^{\infty}(\Sigma)$.
\begin{lemma}\label{dimV} If $\Sigma$ is an equatorial disk, then $\text{dim}(V)=1$. Otherwise, $\text{dim}(V)=3$.
\end{lemma}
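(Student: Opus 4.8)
The plan is to bound $\dim(V)$ from above and below. The upper bound $\dim(V)\le 3$ is immediate, since $V$ is by definition spanned by the three functions $\nu_1,\nu_2,\nu_3$; the whole content of the lemma is deciding whether these functions are linearly independent. For the equatorial disk, $\Sigma$ lies in a plane through the origin, so its unit normal $\nu$ is a \emph{constant} vector in $\R^3$. Hence each $\nu_i=\nu\cdot e_i$ is a constant function on $\Sigma$, and since $|\nu|=1$ at least one is nonzero; thus $V$ is spanned by the constant function and $\dim(V)=1$. So the remaining task is to show that a non-equatorial $\Sigma$ forces $\nu_1,\nu_2,\nu_3$ to be independent, i.e. $\dim(V)=3$.

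I would argue the contrapositive. Suppose the $\nu_i$ are dependent, so there is a nonzero constant vector $a=(a_1,a_2,a_3)$ with $\sum_i a_i\nu_i=\nu\cdot a\equiv 0$ on $\Sigma$. Since the tangent plane at each point is exactly $\nu^{\perp}$, this says that the constant vector $a$ is everywhere tangent to $\Sigma$. Now I would differentiate this identity: for any tangent vector $T$, using that $a$ is constant in $\R^3$, we get $0=D_T(\nu\cdot a)=(D_T\nu)\cdot a$. Writing $a=\sum_j (a\cdot T_j)T_j$ in an orthonormal tangent frame $\{T_1,T_2\}$ and recalling $D_{T_i}\nu\cdot T_j=-\h_{ij}$, this gives $\h(T,a)=0$ for every tangent $T$; that is, $a$ lies in the kernel of the second fundamental form. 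Choosing the frame so that $T_1=a/|a|$ yields $\h_{11}=\h_{12}=0$, and the minimal surface equation $\h_{11}+\h_{22}=0$ then forces $\h_{22}=0$. Hence $\h\equiv 0$ and (each component of) $\Sigma$ is totally geodesic, i.e. a piece of a plane.

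It then remains to identify which planar surfaces can be free boundary minimal surfaces. Along $\partial\Sigma$ the free boundary condition gives $\eta\equiv X$, so the radial position vector $X$ must be tangent to the plane at every boundary point; for the plane $\{x:\, n\cdot x=d\}$ a boundary point $p$ satisfies $n\cdot p=d$, while tangency of $p$ means $n\cdot p=0$, so $d=0$ and the plane passes through the origin. Thus $\Sigma$ is an equatorial disk, contradicting the non-equatorial hypothesis, and therefore $\nu_1,\nu_2,\nu_3$ are independent and $\dim(V)=3$. I expect the main obstacle to be the differentiation step showing $\h\equiv 0$: one must correctly pass from the single scalar relation $\nu\cdot a\equiv 0$ to the statement that $a$ is a null direction of $\h$, and only then invoke minimality to kill the full tensor. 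The final reduction of a totally geodesic FBMS to the equatorial disk is short but should be stated carefully, and (for disconnected $\Sigma$) applied componentwise.
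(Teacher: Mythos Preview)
Your argument is correct and follows the same overall contrapositive structure as the paper: assume a nontrivial linear relation $\nu\cdot a\equiv 0$, deduce $\h\equiv 0$, and conclude $\Sigma$ is equatorial. The only substantive difference is in how you reach $\h\equiv 0$. The paper observes that $\nu\cdot a\equiv 0$ forces the Gauss image $\nu(\Sigma)$ into a great circle, so $\nu$ is nowhere an immersion; since $\nu$ is an immersion precisely where $|\h|\neq 0$, this gives $|\h|\equiv 0$ in one stroke, without ever writing down $\h$ in a frame or invoking the trace-free condition. Your route---differentiating $\nu\cdot a$ to get $\h(\cdot,a)=0$ and then using $\mathrm{tr}\,\h=0$---is slightly more hands-on but equally valid and has the mild advantage of being self-contained (it does not appeal to the earlier remark that the Gauss map is an immersion exactly where $|\h|\neq 0$). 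Your final identification of a planar FBMS with an equatorial disk via $\eta\equiv X$ is also a bit more explicit than the paper, which simply asserts ``$\Sigma$ is equatorial.''
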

\begin{proof}
When $\Sigma$ is an equatorial disk, $\nu$ is a constant vector and thus $\text{dim}(V)=1$. Otherwise, it is clear that $\text{dim}(V)\leq 3$. If $\text{dim}(V)<3$ then there is a constant vector $a$ such that $\nu\cdot a=0$. Thus $\nu(\Sigma)$ is in an equator and $\nu$ is nowhere an immersion. As a consequence, $|\h|=0$ and $\Sigma$ is equatorial. Then the statement follows.  
\end{proof}
Next, we collect various equations which follow from the minimality and free boundary conditions. First, the minimality implies that 
\begin{align*}
\Delta X &=0,\\
J \nu &=0,\\
J \zeta &=0.
\end{align*}
Since the surface meets the boundary sphere perpendicularly, the followings hold:
\begin{align*}
D_\eta X_i &=X,\\
D_{\eta} \nu_i &=-\h(\eta, \eta)X_i,\\ 
D_{\eta} \zeta &=-\h(\eta,\eta).
\end{align*}
Furthermore, we have the following integral identities (for a proof, see \cite{FS16, tran16index}). For $i\neq j$,
\begin{align}
\label{integrateuiuj}
S(\nu_i, \nu_j) &= 2\int_{\partial \Sigma}X_i X_j,\\
\label{integratenu2}
S(\nu_i, \nu_i) &=-2\int_{\Sigma}\nu_i^2.
\end{align}
\begin{lemma}\label{choosecoor}
There is a coordinate of $\mathbb{B}^3$ such that, for $i\neq j$,
\[S(\nu_i, \nu_j) = 2\int_{\partial \Sigma}X_i X_j=0.\]
\end{lemma}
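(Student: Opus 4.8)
The plan is to read $\int_{\partial\Sigma}X_iX_j$ as the matrix, in the frame $\{e_1,e_2,e_3\}$, of a single coordinate-free symmetric bilinear form on $\R^3$, and then to diagonalize that form by the spectral theorem. Concretely, I would set $Q(a,b):=\int_{\partial\Sigma}(X\cdot a)(X\cdot b)\,da$ for $a,b\in\R^3$. This is symmetric and intrinsic to the immersion; in the given orthonormal frame its matrix is exactly $A_{ij}=\int_{\partial\Sigma}X_iX_j$. Because $A$ is a real symmetric $3\times3$ matrix, the spectral theorem furnishes an orthonormal basis $\{\hat e_1,\hat e_2,\hat e_3\}$ of $\R^3$ — that is, a new system of Euclidean coordinates on $\B^3$ — in which $A$ is diagonal. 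Writing $\hat X_i:=X\cdot\hat e_i$, this says precisely that $\int_{\partial\Sigma}\hat X_i\hat X_j=Q(\hat e_i,\hat e_j)=0$ for $i\neq j$.

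It then remains to transport the identity \eqref{integrateuiuj} to the new coordinates. Since \eqref{integrateuiuj} was derived only from the minimality and free boundary conditions and holds for an arbitrary orthonormal frame, applying it to $\{\hat e_1,\hat e_2,\hat e_3\}$ gives $S(\hat\nu_i,\hat\nu_j)=2\int_{\partial\Sigma}\hat X_i\hat X_j=0$ for $i\neq j$, where $\hat\nu_i:=\nu\cdot\hat e_i$. Renaming the coordinates back to $e_i,X_i,\nu_i$, this is exactly the assertion of the lemma.

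The one point genuinely worth checking — and the step I expect to be the main (if mild) obstacle — is the frame covariance just invoked: a priori, diagonalizing the boundary moment matrix $A$ need not simultaneously diagonalize the matrix $\big(S(\nu_i,\nu_j)\big)$, and the lemma demands that both off-diagonal families vanish in the same coordinates. The clean way to secure this is the pointwise orthogonal decomposition of each constant vector along $\Sigma$, namely $e_i=\nabla X_i+\nu_i\nu$, which yields $\nabla X_i\cdot\nabla X_j+\nu_i\nu_j=\delta_{ij}$. Integrating over $\Sigma$, together with the integration by parts $\int_\Sigma\nabla X_i\cdot\nabla X_j=\int_{\partial\Sigma}X_iX_j$ (valid since $\Delta X_j=0$ and $D_\eta X_j=X_j$), and combining with \eqref{integrateuiuj} and \eqref{integratenu2}, shows that the full symmetric matrices satisfy $\big(S(\nu_i,\nu_j)\big)=2\big(\int_{\partial\Sigma}X_iX_j\big)-2|\Sigma|\,I$. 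Thus these two matrices differ only by a scalar multiple of the identity; any orthogonal change of coordinates that diagonalizes one diagonalizes the other, so the choice produced above makes $S(\nu_i,\nu_j)$ and $2\int_{\partial\Sigma}X_iX_j$ vanish together off the diagonal, as required.
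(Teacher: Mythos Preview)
Your proof is correct and follows the same route as the paper: define the symmetric bilinear form $Q(a,b)=\int_{\partial\Sigma}(X\cdot a)(X\cdot b)$, diagonalize it by the spectral theorem, and then read off the conclusion from \eqref{integrateuiuj} in the new orthonormal frame. Your third paragraph, while correct, is superfluous: as you already note in the second paragraph, the identity \eqref{integrateuiuj} is frame-independent, so once the boundary moment matrix is diagonal the vanishing of $S(\nu_i,\nu_j)$ for $i\neq j$ is immediate without any further computation.
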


\begin{proof}
Consider the 2-tensor 
\[R(e_i, e_j) :=\int_{\partial\Sigma}(X\cdot e_i)(X\cdot e_j).\]
It is clear that $R$ is symmetric. The result then follows from standard linear algebra. 
\end{proof}

 $\Sigma$ is said to have a non-degenerate Gauss map if $\nu$ is an immersion everywhere. In that case, the abstract surface $\Sigma_0$ could be equipped with the pullback by $\nu$ of the round metric. To distinguish with the abstract surface $(\Sigma_0, g)$, that Riemannian surface is denoted by $\Omega=(\Sigma_0, g_1)$. 
\begin{remark}
When $\nu$ is one-to-one, $\Omega$ is isometric to a domain on the sphere. 
\end{remark} 
 Since $\nu$ is conformal, we have
\begin{equation}
\label{conformalg1}
g_1=\frac{|\h|^2}{2}g.
\end{equation}
Consequently, the Laplacian with respect to $g_1$ is related to one with respect to $g$ by
\[\Delta_1 =\frac{2}{|\h|^2} \Delta .\]   

Next we recall the following characterization, due to R. Souam\cite{souam05}, of a FBMS by its Gauss map. The proof will be provided for completeness.  

\begin{lemma}
Let $\Sigma \subset \mathbb{B}^3$ be a FBMS with non-degenerate Gauss map $\nu$.  Then $\Omega=(\Sigma, g_1)$ satisfies the following properties:
\begin{itemize}
\item $2$ is an eigenvalue of the Laplacian with Dirichlet condition.
\item One eigenfunction associated with eigenvalue $2$ has constant boundary derivative. 
\end{itemize}
\end{lemma}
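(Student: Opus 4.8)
The plan is to exhibit an explicit eigenfunction, namely the support-type function $\zeta=X\cdot\nu$, and to verify both properties directly from the structural equations recorded above. First I would observe that $\zeta$ is an eigenfunction of the Dirichlet Laplacian of $\Omega$ with eigenvalue $2$. Minimality gives $J\zeta=0$, i.e.\ $\Delta\zeta=-|\h|^2\zeta$, so the conformal relation $\Delta_1=\tfrac{2}{|\h|^2}\Delta$ yields
\[
\Delta_1\zeta=\frac{2}{|\h|^2}\Delta\zeta=-2\zeta .
\]
To see the Dirichlet condition, recall that along $\partial\Sigma$ the free boundary condition gives $\eta\equiv X$, while $\nu$ is orthogonal to every tangent vector of $\Sigma$, in particular to the conormal $\eta$; hence $\zeta=X\cdot\nu=\eta\cdot\nu=0$ on $\partial\Sigma$. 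This proves the first bullet. One should note $\zeta\not\equiv0$: otherwise $X$ is everywhere tangent to $\Sigma$, forcing $\Sigma$ to be a minimal cone and hence equatorial, contradicting non-degeneracy of $\nu$. The same computation gives $\Delta_1\nu_i=-2\nu_i$, consistent with $\nu$ realizing $\Omega$ inside $\mathbb{S}^2$.

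For the second bullet I would compute the $g_1$-normal derivative of $\zeta$. Since $g_1=\tfrac{|\h|^2}{2}g$, the $g$-unit conormal $\eta$ rescales to the $g_1$-unit conormal $\tfrac{\sqrt2}{|\h|}\eta$, so the boundary derivative of $\zeta$ in $\Omega$ is
\[
\frac{\sqrt2}{|\h|}\,D_\eta\zeta=-\frac{\sqrt2\,\h(\eta,\eta)}{|\h|},
\]
using $D_\eta\zeta=-\h(\eta,\eta)$. Thus everything reduces to controlling the ratio $\h(\eta,\eta)/|\h|$ along $\partial\Sigma$.

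The crux, and the step I expect to be the main obstacle, is the observation that $\eta$ is a principal direction of $\Sigma$ along the boundary. This follows from the free boundary identity $D_\eta\nu=-\h(\eta,\eta)X=-\h(\eta,\eta)\eta$: for any tangent $\tau$ with $\tau\perp\eta$ one gets $\h(\eta,\tau)=-D_\eta\nu\cdot\tau=\h(\eta,\eta)(\eta\cdot\tau)=0$, so the off-diagonal entry of $\h$ vanishes. Consequently the two principal curvatures along $\partial\Sigma$ are $\h(\eta,\eta)$ and, by minimality, $-\h(\eta,\eta)$, giving $|\h|^2=2\,\h(\eta,\eta)^2$. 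Substituting $|\h|=\sqrt2\,|\h(\eta,\eta)|$ into the displayed derivative collapses it to $-\h(\eta,\eta)/|\h(\eta,\eta)|=\mp1$. Since non-degeneracy forces $|\h|\neq0$, the quantity $\h(\eta,\eta)$ is continuous and nowhere zero, hence of constant sign on each boundary component, so the $g_1$-boundary derivative of $\zeta$ equals the constant $\mp1$; this establishes the second bullet. The only delicate points are the conformal rescaling of the conormal and the sign bookkeeping, i.e.\ ensuring that the chosen $\nu$, normalized so that $\zeta>0$ somewhere, yields a single constant value across all boundary components.
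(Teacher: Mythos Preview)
Your proposal is correct and follows essentially the same route as the paper: both exhibit $\zeta=X\cdot\nu$ as the Dirichlet eigenfunction, use the conformal relation $\Delta_1=\tfrac{2}{|\h|^2}\Delta$ to get eigenvalue $2$, and rescale the conormal to obtain the boundary derivative $-\h(\eta,\eta)/|\h(\eta,\eta)|$. You actually supply more detail than the paper does---in particular the explicit verification that $\zeta|_{\partial\Sigma}=0$, that $\zeta\not\equiv 0$, and the principal-direction argument justifying $|\h|^2=2\h(\eta,\eta)^2$ along $\partial\Sigma$---while the paper simply asserts the final formula; your caveat about a single sign across all boundary components is a point the paper leaves equally unaddressed.
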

\begin{proof}
Consider the function $\zeta=X\cdot \nu$. Since $\Sigma$ is a FBMS, we have, 
\begin{align*}
(\Delta+|\h|^2) \zeta &=0,\\
D_\eta \zeta &= -\h(\eta,\eta).
\end{align*}
We compute, for $\eta_1$ the outward conormal vector of $\Omega$,
\begin{align*} 
\frac{|\h|^2}{2}(\Delta_1+2)\zeta &= (\Delta+|\h|^2) \zeta=0,\\ 
(D_1)_{\eta_1} \zeta &= \sqrt{\frac{2}{|\h|^2}} D_{\eta} \zeta=\frac{-\h(\eta,\eta)}{|\h(\eta,\eta)|}. 
\end{align*}
The result then follows. 
\end{proof}
\begin{remark} If $\Sigma$ is star-shaped then $2$ is the first eigenvalue of $\Omega$. 
\end{remark}
Conversely, we have the following.
\begin{lemma}
Let $\nu:\Omega=(\Sigma, g_1)\mapsto \mathbb{S}^2$ be an isometric immersion such that 
\begin{itemize}
\item $2$ is an eigenvalue of the Laplacian with Dirichlet condition.
\item One eigenfunction $u$ associated with eigenvalue $2$ has boundary derivative constantly equal to 1.
\end{itemize}
We then identify $\Omega$ with its image $\nu(\Omega)$ on $\mathbb{S}^2$. Then the map
\begin{align*}
X^u: ~\Omega  &\mapsto \mathbb{R}^3,\\
\nu &\mapsto X^u(\nu)=\nabla u(\nu)+u(\nu)\nu.
\end{align*} 
defines a branched minimal surface in $\mathbb{R}^3$ with boundary lying on $\mathbb{S}^2$ and intersecting the sphere perpendicularly. Here $\nabla u(\cdot)$ is identified as a vector in $\mathbb{R}^3$.
\end{lemma}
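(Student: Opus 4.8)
The plan is to verify directly that $X^u=\nabla u+u\nu$ is a conformal branched immersion with unit normal $\nu$, that it has vanishing mean curvature, and that it satisfies the free boundary condition. Throughout I identify $\Omega$ with its image $\nu(\Omega)\subset\mathbb{S}^2$, so that $\nabla u$ is the gradient of $u$ with respect to the round metric $g_1$ (a vector tangent to $\mathbb{S}^2$, hence a vector in $\mathbb{R}^3$), and the hypotheses become $\Delta_1 u=-2u$ on $\Omega$, with $u=0$ and $D_{\eta_1}u=1$ along $\partial\Omega$.

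First I would compute the differential of $X^u$. For a tangent vector $T$ to $\Omega$, using the Gauss formula for the unit sphere $D_A B=\nabla_A B-\langle A,B\rangle\nu$ (with $\nabla$ the Levi-Civita connection of $g_1$) together with $D_T\nu=T$, one obtains
\[
D_T X^u=\big(\nabla^2 u(T)-\langle T,\nabla u\rangle\nu\big)+\langle\nabla u,T\rangle\nu+u\,T=\nabla^2 u(T)+u\,T,
\]
where $\nabla^2 u(T):=\nabla_T\nabla u$. The two normal terms cancel, so $D_T X^u$ is tangent to $\mathbb{S}^2$ and orthogonal to $\nu$; hence $\nu$ is a unit normal (the Gauss map) of the image surface. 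This identifies the differential of $X^u$ with the symmetric endomorphism $P:=\nabla^2 u+u\,\mathrm{Id}$ of $T\mathbb{S}^2$.

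The decisive algebraic point is that the eigenvalue hypothesis forces $P$ to be trace-free: $\mathrm{tr}\,P=\Delta_1 u+2u=0$. A trace-free symmetric endomorphism of a $2$-plane satisfies $P^2=\tfrac12|P|^2\,\mathrm{Id}$, so in an orthonormal frame $\{T_1,T_2\}$ the induced metric is $g^u(T_i,T_j)=\langle PT_i,PT_j\rangle=\tfrac12|P|^2\,\delta_{ij}$; thus $X^u$ is conformal, with branch points exactly where $P=0$. For minimality I would compute the second fundamental form with respect to $\nu$: differentiating $PT_j$ and extracting the normal part through the sphere's Gauss formula gives $\mathrm{II}_{ij}=-\langle T_i,PT_j\rangle=-P_{ij}$, so the mean curvature is proportional to $\mathrm{tr}_{g^u}\mathrm{II}=-\mathrm{tr}\,P/(\tfrac12|P|^2)=0$. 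Hence $X^u$ is a branched minimal immersion.

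Finally I would check the boundary conditions. Along $\partial\Omega$ we have $u=0$, so the tangential derivative of $u$ vanishes and $\nabla u=(D_{\eta_1}u)\,\eta_1=\eta_1$; therefore $X^u=\eta_1$ is a unit vector and the boundary of the surface lies on $\mathbb{S}^2$. For perpendicular intersection it suffices to show the conormal $D_{\eta_1}X^u=\nabla^2 u(\eta_1)$ (here $u=0$ kills the $u\eta_1$ term) is radial, i.e. parallel to $X^u=\eta_1$. This reduces to $\nabla^2 u(\eta_1,\tau)=0$ for $\tau$ tangent to $\partial\Omega$, which follows from $\nabla^2 u(\eta_1,\tau)=\tau(D_{\eta_1}u)-\langle\nabla_\tau\eta_1,\nabla u\rangle=0$, using that $D_{\eta_1}u\equiv1$ is constant along the boundary and that $\nabla_\tau\eta_1\perp\eta_1=\nabla u$; since conformality gives $P\eta_1\perp P\tau$, the surface's conormal points along $P\eta_1\parallel\eta_1=X^u$. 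The one point demanding genuine care is the bookkeeping of tangential versus normal components on the sphere in the first and last steps; once the differential is recognized as the trace-free operator $P$, conformality, minimality, and the free boundary condition all fall out mechanically.
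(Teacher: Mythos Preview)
Your proof is correct and more self-contained than the paper's. The paper does not carry out the differential-geometric computation for branched minimality at all: it simply cites Montiel--Ros \cite{MontielRos90} for the fact that $X^u$ is a branched minimal immersion with Gauss map $\nu$, whereas you identify $dX^u$ with the trace-free endomorphism $P=\nabla^2 u+u\,\mathrm{Id}$ and read off conformality and $H=0$ from $P^2=\tfrac12|P|^2\,\mathrm{Id}$ and $\mathrm{tr}\,P=0$. For the free boundary condition the arguments also diverge: the paper observes that since $\nu$ is the unit normal and $X^u\cdot\nu=u$, the affine tangent plane at a boundary point is $\{p:\,p\cdot\nu=0\}$, which contains the origin and hence the radial direction; you instead compute the conormal $P\eta_1=\nabla^2 u(\eta_1)$ directly and show it is parallel to $\eta_1=X^u$ via $\nabla^2 u(\eta_1,\tau)=0$. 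Your route is longer but makes every step explicit and exposes the mechanism (the trace-free Hessian operator) clearly; the paper's route is quicker but relies on an external reference for the main analytic content and on a slicker tangent-plane observation for perpendicularity.
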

\begin{proof}
The fact that $X$ is a branched minimal surface follows from a straightforward computation (see \cite{MontielRos90}). A consequence is that $\nu$ is normal to the surface $X^u$.

Furthermore, $|X^u|^2= |\nabla u|^2+ u^2=1$ along the boundary of $\Omega$. So the boundary of $X^u(\Omega)$ lies on $\mathbb{S}^2$. Finally, for $\nu\in \Omega$, the tangent plane at $X^u(\nu)$ of $X$ is given by $\{p\in \mathbb{R}^3, p\cdot \nu =\nu\cdot X(\nu)=u(\nu)\}$. Thus, for $\nu\in \partial \Omega$, the tangent plane contains the origin. That is, the surface $X(\Omega)$ intersects the sphere perpendicularly.   
\end{proof}

Next, we recall the following result of R. Reilly \cite{reilly73}.
\begin{lemma}
\label{gaussimmersion}
If the immersion $X:M^n\mapsto \mathbb{S}^{n+1}$ has non-degenerate Gauss map $\nu$, then the principal curvature of $ \nu(M)\subset \mathbb{S}^{n+1}$ are the reciprocals of those for $X(M)\subset \mathbb{S}^{n+1}$. Furthermore, nondegeneracy of the Gauss map is equivalent to the non-vanishing of all principal curvatures of $(X,M)$.
\end{lemma}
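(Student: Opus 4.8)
The plan is to work in the ambient space $\mathbb{R}^{n+2}$ and exploit the duality between the position vector $X$ and the Gauss map $\nu$ that is built into any hypersurface of the sphere. For $X:M^n\to\mathbb{S}^{n+1}\subset\mathbb{R}^{n+2}$, at each point I would set up the orthonormal frame $\{X,\nu,e_1,\dots,e_n\}$, where $e_1,\dots,e_n$ are tangent to $M$, $X$ is the position vector (the outward normal to $\mathbb{S}^{n+1}$), and $\nu$ is the unit normal of $M$ inside $\mathbb{S}^{n+1}$. The two structure equations I would rely on are $D_v X = v$ and $D_v\nu=-A(v)$ for $v$ tangent to $M$, where $A$ is the shape operator of $M$ in $\mathbb{S}^{n+1}$ and the $\kappa_i$ are its eigenvalues. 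That $D_v\nu$ is tangent to $M$ follows immediately from differentiating $\nu\cdot\nu=1$ and $\nu\cdot X=0$, using $\nu\perp T_pM$.

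The second assertion then comes for free: since $d\nu_p=-A_p$, the Gauss map $\nu$ is an immersion at $p$ precisely when $A_p$ is invertible, that is, when every principal curvature $\kappa_i$ is nonzero. This is exactly the claimed equivalence between non-degeneracy of the Gauss map and non-vanishing of all principal curvatures of $(X,M)$.

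For the reciprocity of principal curvatures I would pass to principal directions $e_i$ with $Ae_i=\kappa_i e_i$, so that $D_{e_i}\nu=-\kappa_i e_i$. The key observation is that the image hypersurface $\nu(M)\subset\mathbb{S}^{n+1}$, regarded with position vector $\nu$, has $X$ as a unit normal: indeed $X\cdot\nu=0$ and $X\cdot e_i=0$, while its tangent space is spanned by $d\nu(e_i)=-\kappa_i e_i$ and hence coincides with $T_pM$. Thus the roles of $X$ and $\nu$ simply swap. Writing $\tilde e_i:=d\nu(e_i)=-\kappa_i e_i$ and computing the shape operator $\tilde A$ of $\nu(M)$ with respect to the normal $X$ via $\tilde A(w)=-D_w X$, and using that moving along $\nu(M)$ in the direction $\tilde e_i$ corresponds to moving along $M$ in the direction $e_i$, I obtain $\tilde A(\tilde e_i)=-D_{e_i}X=-e_i=\tfrac{1}{\kappa_i}\tilde e_i$. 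Hence the principal curvatures of $\nu(M)$ are the reciprocals $1/\kappa_i$.

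The main obstacle I anticipate is purely bookkeeping: keeping the sign conventions for the two shape operators consistent, and correctly transporting tangent vectors of $M$ to those of $\nu(M)$ through $d\nu$ (the two tangent spaces agree as subspaces of $\mathbb{R}^{n+2}$ only after choosing principal directions, and the identification carries the factor $-\kappa_i$). Once the frame equations $D_v X=v$ and $D_v\nu=-A(v)$ are in place and the symmetry $X\leftrightarrow\nu$ is noted, both claims follow; indeed one can phrase the entire argument as the remark that $(\nu,X)$ is again an admissible position–normal pair, so the construction is an involution and the relation $\tilde\kappa_i=1/\kappa_i$ holds automatically.
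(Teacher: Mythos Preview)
Your argument is correct; this is the standard duality proof. Note, however, that the paper does not supply its own proof of this lemma at all---it is quoted as a known result of Reilly and simply cited, so there is nothing in the paper to compare your proposal against beyond the attribution.
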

We are now ready to prove the main result of this section, namely a re-formalization of the Jacobi-Steklov problem as a consideration on a spherical domain. 
\begin{theorem}
Let $\Sigma \subset \mathbb{B}^3$ be a FBMS with non-degenerate Gauss map $\nu$. Let $g_1$ be the pullback by $\nu$ of the round metric on $\Sigma$. There is a one-to-one correspondence between Jacobi-Steklov eigenvalues associated with (\ref{jacobiSteklov}) on $\Sigma$ with eigenvalues of the following problem on $\Omega=(\Sigma, g_1)$:
\begin{equation}
\begin{cases}
\label{jacobitransform}
(\Delta_1+2) u &=0,\\
(D_1)_{\eta_1} u &= |\kappa| \lambda u.
\end{cases}
\end{equation}
Here $\eta_1$ is the outward conormal vector along the boundary of $\Omega$ and $\kappa$ is the geodesic curvature of $\partial \Omega\subset \Omega$. 
\end{theorem}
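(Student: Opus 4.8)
The plan is to exploit that $\Sigma$ and $\Omega$ share the same underlying smooth surface $\Sigma_0$, so that an eigenfunction of one problem is literally a function on the other; the correspondence then follows once I match, for a common function $u$ and a common scalar $\lambda$, the interior equations and the boundary conditions of the two problems. The interior equations are immediately equivalent. Using the conformal relation $g_1=\frac{|\h|^2}{2}g$ and its consequence $\Delta_1=\frac{2}{|\h|^2}\Delta$, I would compute $(\Delta_1+2)u=\frac{2}{|\h|^2}(\Delta u+|\h|^2u)=\frac{2}{|\h|^2}Ju$. Since a non-degenerate Gauss map forces all principal curvatures, hence $|\h|$, to be nonzero everywhere (Lemma \ref{gaussimmersion}), the factor $\frac{2}{|\h|^2}$ is positive and $Ju=0$ if and only if $(\Delta_1+2)u=0$.

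Next I would handle the conormal derivatives. Because $g_1=\frac{|\h|^2}{2}g$ rescales lengths by $\frac{|\h|}{\sqrt 2}$, the outward $g_1$-unit conormal is $\eta_1=\frac{\sqrt 2}{|\h|}\eta$, exactly as in the earlier identity $(D_1)_{\eta_1}\zeta=\sqrt{2/|\h|^2}\,D_\eta\zeta$. Thus $(D_1)_{\eta_1}u=\frac{\sqrt 2}{|\h|}D_\eta u$, so the Steklov condition $D_\eta u=\lambda u$ transforms into $(D_1)_{\eta_1}u=\frac{\sqrt 2}{|\h|}\lambda u$. Comparing with the target condition, the whole theorem reduces to the pointwise geometric identity $|\kappa|=\frac{\sqrt 2}{|\h|}$ along $\partial\Omega$; writing the minimal-surface principal curvatures as $\pm k$ with $k=|\h|/\sqrt 2$, this is the reciprocity $|\kappa|=1/k$ between the boundary geodesic curvature of the Gauss image and the principal curvature of $\Sigma$.

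Establishing this identity is the heart of the argument and the step I expect to be most delicate. The crucial observation is that the free boundary condition makes $\partial\Sigma$ a line of curvature: along the boundary $\eta=X$ and $D_TX=T$ for the unit tangent $T$, so $\h(T,\eta)=D_T\eta\cdot\nu=T\cdot\nu=0$. Hence in the orthonormal frame $\{T,X,\nu\}$ along $\partial\Sigma$ the structure equations are $X'=T$, $\nu'=-\h(T,T)T$, and $T'=-X+\h(T,T)\nu$, where the coefficient $-1$ of $X$ in $T'$ comes from differentiating $T\cdot X=0$. I would then view the image curve $c=\nu\circ\gamma$ on $\mathbb{S}^2$, whose sphere normal at $c(s)$ is $\nu(s)$, and compute its geodesic curvature from $\kappa=\frac{\langle c'\times c'',\nu\rangle}{|c'|^3}$. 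Since $\h(T,\eta)\equiv 0$ (and therefore its derivative along $\partial\Sigma$ vanishes too), the structure equations give $\langle c'\times c'',\nu\rangle=-\h(T,T)^2=-k^2$ and $|c'|=k$, so $|\kappa|=1/k$ as required. This reciprocity may also be viewed as a manifestation of Reilly's Lemma \ref{gaussimmersion} applied to the boundary line of curvature.

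Finally I would assemble the pieces: for a common function $u$ on $\Sigma_0$, the pair $Ju=0$, $D_\eta u=\lambda u$ holds if and only if $(\Delta_1+2)u=0$, $(D_1)_{\eta_1}u=|\kappa|\lambda u$ holds, with the same $\lambda$. This equivalence is manifestly bijective on eigenfunctions and so yields the claimed one-to-one correspondence of eigenvalues. I would also emphasize that non-degeneracy is used twice: to make the interior conformal factor positive, and to guarantee $k\neq 0$ on $\partial\Sigma$ so that the weight $|\kappa|=1/k$ in the transformed Steklov condition is finite and well defined.
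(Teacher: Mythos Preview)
Your proof is correct and follows the same overall route as the paper: transform the interior equation via the conformal factor $g_1=\tfrac{|\h|^2}{2}g$, transform the conormal derivative by $\eta_1=\tfrac{\sqrt 2}{|\h|}\eta$, and then identify the resulting weight $\tfrac{\sqrt 2}{|\h|}$ with $|\kappa|$. The only difference is in this last step: the paper observes that $\tfrac{|\h|}{\sqrt 2}=|\h(X,X)|$ is the geodesic curvature of $\partial\Sigma$ inside $\partial\mathbb{B}^3=\mathbb{S}^2$ and then invokes Reilly's reciprocity (Lemma~\ref{gaussimmersion}) to pass to the geodesic curvature of the Gauss image, whereas you establish $|\kappa|=1/k$ directly by the Frenet-type computation along the boundary line of curvature---which, as you note yourself, is exactly the one-dimensional instance of Reilly's lemma.
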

\begin{proof}
Let $u$ be an eigenfunction corresponding to a Jacobi-Steklov eigenvalue $\lambda$. Then,
\begin{align*}
(\Delta+|\h|)^2 &= 0,\\
D_{n} u &= \lambda u.
\end{align*}
By Equation \ref{conformalg1}, 
\begin{align*}
(\Delta_1+2) u &= \frac{2}{|\h|^2}(\Delta+|\h|^2)\\
 &=0.
\end{align*}
Similarly, 
\begin{align*}
(D_1)_{\eta_1} u &= \frac{\sqrt{2}}{|\h|}D_\eta u\\
&= \frac{\sqrt{2}}{|\h|}\lambda u.
\end{align*}
However, $\frac{|\h|}{\sqrt{2}}=|\h(X,X)|$ is exactly the absolute value of the geodesic curvature of $\partial \Sigma\subset \mathbb{S}^2$. Applying Lemma \ref{gaussimmersion} yields the result. 
\end{proof}

\begin{remark}
When $\Sigma$ is star-shaped then $\zeta=X\cdot \nu$ is the first eigenfunction of the Laplacian on $\Omega$. As a consequence, the boundary integral of any eigenfunction of (\ref{jacobitransform}) is zero. 
\end{remark}

\begin{remark}
(\ref{jacobitransform}) corresponds to the following functional:
\[ S_1(u,u):=\int_{\Omega} |\nabla u|^2-2u^2 -\int_{\partial \Omega} |\kappa| u^2. \]
\end{remark}

\section{Proof of the Main Result}
\label{proofmainthm}
In this section, we will give a proof of Theorem \ref{mainthm1}. First, we will consider a slightly more general setting of an orientable free boundary minimal hypersurface $\Sigma^n\subset \mathbb{B}^{n+1}$. $\Sigma$ is assumed to be real analytic outside a possible singular set away from the boundary. That weakened assumption allows the consideration of minimal cones. For such a hypersurface, a smooth section of the normal bundle is a Jacobi field. Then we consider an additional assumption on each normal component along the boundary: 
\begin{equation}
\label{eigenequation}
D_{\eta}\nu_i =\lambda_i \nu_i~~\forall i.
\end{equation}
\begin{remark}
If $\Sigma$ is without a singular set then assumption \ref{eigenequation} is equivalent to the condition that each normal component is an eigenfunction of (\ref{jacobiSteklov}). 
\end{remark}
Recall that 
\begin{equation}
\label{bdryeqn}
D_{\eta}\nu_i =-\h(\eta,\eta)X_i.
\end{equation}

For $f:=-\h(\eta, \eta)$, assumption \ref{eigenequation} yields:
\begin{align}
fX_i \nu_i &= \lambda_i \nu_i^2,\\
f^2X_i^2 &= \lambda_i^2 \nu_i^2,\\
0 &=\sum_{i} \lambda_i \nu_i^2,\\
\label{fsquare}
f^2 &= \sum_i \lambda_i^2 \nu_i^2.
\end{align}

\begin{lemma}\label{onelambdaiszero} 
$\lambda_i=0$ for some $i$ if and only if $\Sigma$ is a minimal cone. 
\end{lemma}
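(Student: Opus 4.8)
The plan is to reduce both implications to the single geometric fact that, for a free boundary minimal hypersurface, being a minimal cone with vertex at the origin is equivalent to the vanishing of the support function $\zeta=X\cdot\nu$ on all of $\Sigma$. Indeed, if $\zeta\equiv 0$ then the position field $X$ is everywhere tangent to $\Sigma$; its integral curves satisfy $\dot\gamma=\gamma$, hence are rays $s\mapsto e^{s}\gamma(0)$ through the origin that remain in $\Sigma$, so $\Sigma$ is a cone, and conversely a cone contains these rays and has $X$ tangent, giving $\zeta\equiv 0$. I would record at the outset that along $\partial\Sigma$ one always has $\zeta=0$ (because $\eta=X$ is tangent while $\nu$ is normal) and $D_\eta\zeta=-\h(\eta,\eta)=f$.

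For the direction (cone $\Rightarrow$ some $\lambda_i=0$): on a cone $\zeta\equiv0$, so $f=D_\eta\zeta=0$ along $\partial\Sigma$; by (\ref{bdryeqn}) this gives $D_\eta\nu_i=fX_i=0$, hence $\lambda_i\nu_i=0$ for every $i$ by (\ref{eigenequation}). Since $\sum_i\nu_i^2=1$, at any boundary point some $\nu_i$ is nonzero, forcing the corresponding $\lambda_i=0$.

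For the converse, suppose $\lambda_k=0$ for some $k$. Comparing (\ref{eigenequation}) with (\ref{bdryeqn}) gives $0=\lambda_k\nu_k=fX_k$ on $\partial\Sigma$, i.e.\ $fX_k\equiv 0$ there. Because $\Sigma$ is real analytic up to the boundary, both $f$ and $X_k$ are analytic on each connected component of $\partial\Sigma$, so on each component either $f\equiv 0$ or $X_k\equiv 0$ (the zero set of a nontrivial analytic function being nowhere dense). I would then split into two cases. If $X_k\equiv0$ on some boundary component, then since $\Delta X_k=0$ and $D_\eta X_k=X_k$, both Cauchy data of the harmonic function $X_k$ vanish on that component; unique continuation forces $X_k\equiv0$ on all of $\Sigma$, so $\Sigma$ lies in the hyperplane $\{x_k=0\}$ and is therefore the equatorial disk, a minimal cone. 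Otherwise $f\equiv0$ on every component, hence on all of $\partial\Sigma$; then $\zeta$ is a Jacobi field ($J\zeta=0$) whose Cauchy data $\zeta=0$, $D_\eta\zeta=f=0$ vanish on the whole boundary, and unique continuation gives $\zeta\equiv0$, so $\Sigma$ is a minimal cone by the characterization above.

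The main obstacle is the rigorous use of unique continuation from boundary Cauchy data (Aronszajn/Holmgren) for the elliptic operators $\Delta$ and $J=\Delta+|\h|^2$; this is exactly where the standing real-analyticity assumption on $\Sigma$ (allowing continuation across a boundary component and controlling the possible interior singular set) is needed. A secondary point to handle carefully is the analytic dichotomy $f\equiv0$ or $X_k\equiv0$ on each component, together with the observation that either alternative already yields a (possibly equatorial) minimal cone, so no mixed behavior among the boundary components requires separate treatment.
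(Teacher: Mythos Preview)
Your argument is correct and in several respects sharper than the paper's. Both you and the paper start from $fX_k\equiv 0$ and use real analyticity to split into $f\equiv 0$ or $X_k\equiv 0$ on the boundary. The difference is in how each branch is closed. In the $f\equiv 0$ case the paper observes that $\partial\Sigma$ is then minimal in $\partial\mathbb{B}^{n+1}$ and invokes ``unique continuation of the minimal free boundary condition'' to conclude $\Sigma$ is a cone; you instead apply unique continuation directly to the Jacobi field $\zeta$ with vanishing Cauchy data, which is more self-contained. In the $X_k\equiv 0$ case the paper only uses that $\partial\Sigma$ is round, deduces rotational symmetry, obtains ``catenoid or equator,'' and then has to eliminate the catenoid by an external computation; your use of unique continuation for the harmonic function $X_k$ (with $X_k=0$ and $D_\eta X_k=X_k=0$ on a component) gives $X_k\equiv 0$ on $\Sigma$ immediately, hence $\Sigma$ is equatorial, so the catenoid step disappears entirely. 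You also supply the (easy) converse, which the paper's proof does not write out, and you are more careful about the per-component dichotomy than the paper. The main point to keep in mind---which you already flag---is that the Cauchy-data unique continuation steps lean on the standing real-analyticity hypothesis near the boundary and on the smallness of the interior singular set in order to propagate across all of $\Sigma$.
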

\begin{proof}
If $\lambda_i=0$, then by (\ref{eigenequation}) and (\ref{bdryeqn}), $f X_i=0$. Since $\Sigma$ is real analytic along the boundary, either $f\equiv 0$ or $X_i \equiv 0$ along the boundary. 

If $f\equiv 0$, then the mean curvature of $\partial \Sigma\subset \partial \mathbb{B}^{n+1}$ is zero. In that case, $\partial\Sigma$ must be a minimal surface in the boundary sphere. By the unique continuation of the minimal free boundary condition, $\Sigma$ must be a minimal cone. 

If $X_i \equiv 0$, then $\partial\Sigma$ must be the intersection of the boundary sphere and an hyper-plane. Thus, $\partial\Sigma$ must be round. By the unique continuation again, $\Sigma$ must be rotationally symmetric. Hence, $\Sigma$ is either a free boundary catenoid or an equator. Direct computation, see \cite{SSTZ17morse}, then eliminates the former. 
\end{proof}

\begin{lemma}
\label{equalthen0}
If $\lambda_1=\lambda_2...=\lambda_n=\lambda$, then $\lambda=0$. 
\end{lemma}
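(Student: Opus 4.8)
The plan is to extract the conclusion directly from the pointwise identities already established along $\partial\Sigma$. The crucial one is the third relation displayed above,
\[ \sum_i \lambda_i \nu_i^2 = 0, \]
which holds at every point of the boundary. This identity is itself a consequence of the free boundary condition: along $\partial\Sigma$ the conormal satisfies $\eta \equiv X$, and since $\nu$ is normal to $\Sigma$ it is orthogonal to $\eta$, so $X\cdot\nu = \sum_i X_i\nu_i = 0$; summing $fX_i\nu_i = \lambda_i\nu_i^2$ over $i$ then annihilates the left-hand side and yields the stated relation.

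Under the hypothesis $\lambda_1 = \dots = \lambda_n = \lambda$, the common factor $\lambda$ pulls out of the sum, giving
\[ 0 = \lambda \sum_i \nu_i^2 = \lambda\,|\nu|^2 = \lambda, \]
where the last equality uses that $\nu$ is a unit section of the normal bundle, so $\sum_i \nu_i^2 = |\nu|^2 \equiv 1$ everywhere on $\Sigma$, in particular on $\partial\Sigma$. Hence $\lambda = 0$, as claimed.

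There is no substantive obstacle here; the only point requiring care is to confirm that the orthogonality $X\cdot\nu = 0$ genuinely holds along $\partial\Sigma$ (rather than $\zeta = X\cdot\nu$ being merely controlled), which is precisely what the perpendicular intersection with $\partial\mathbb{B}^{n+1}$ provides. I would also note that the argument is purely pointwise on the boundary and does not invoke \eqref{fsquare}; that stronger identity will instead be the relevant tool when the $\lambda_i$ are permitted to differ.
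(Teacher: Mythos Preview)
Your argument is correct and is actually more direct than the paper's own proof. The paper does not use the identity $\sum_i \lambda_i \nu_i^2 = 0$; instead it invokes \eqref{fsquare}, $f^2 = \sum_i \lambda_i^2 \nu_i^2$, to conclude that $f^2 = \lambda^2$ is constant along $\partial\Sigma$, and then argues by contradiction: if this constant were nonzero, dividing $fX_i = \lambda \nu_i$ by $f$ would give $X_i = c\nu_i$ with $c = \pm 1$, forcing $X\cdot\nu = c \neq 0$ and contradicting the free boundary condition. Your route bypasses this detour entirely---once $\sum_i \lambda_i \nu_i^2 = 0$ is in hand (and you correctly justify it from $X\cdot\nu = 0$ on $\partial\Sigma$), the conclusion is immediate. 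The paper's approach has the minor advantage of showing along the way that $f$ must vanish identically on the boundary, but that information is not used downstream. Your closing remark that \eqref{fsquare} is reserved for the case of unequal $\lambda_i$ is apt: it is precisely in Lemma~\ref{nolambdaiszero} that this identity does real work.
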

\begin{proof}
By equation (\ref{fsquare}), the assumption here implies $f$ is a constant. If the constant is non-zero then equations (\ref{eigenequation}) and (\ref{bdryeqn}) imply $X_i=c\nu_i ~\forall i$ for $c=\pm 1$ , a contradiction to the fact that $\sum_i X_i \nu_i =0$ and $\sum_i \nu_i^2=1$.   
\end{proof}

Now we restrict to the case $\Sigma$ is of dimension two. 
\begin{lemma}
\label{nolambdaiszero}
If $\lambda_i\neq 0$ for each $i$, then $\Sigma$ must be rotationally symmetric. 
\end{lemma}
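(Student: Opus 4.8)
The plan is to carry out all the analysis along the boundary curve $\partial\Sigma$, where the eigenvalue hypothesis (\ref{eigenequation}) collides with the free boundary geometry to force rigid algebraic identities on the coordinate functions. Write $f=-\h(\eta,\eta)$. Comparing (\ref{eigenequation}) with the free boundary identity (\ref{bdryeqn}) gives the pointwise relation $fX_i=\lambda_i\nu_i$ on $\partial\Sigma$ for every $i$. Since every $\lambda_i\neq 0$, this shows $f$ is nowhere zero on $\partial\Sigma$ (at a zero of $f$ we would get $\lambda_i\nu_i=0$, hence $\nu_i=0$ for all $i$, contradicting $|\nu|=1$), so $\nu_i=(f/\lambda_i)X_i$ pointwise. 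Combining with $X\cdot\nu=0$ and $|X|=1$ yields the two identities $\sum_i X_i^2=1$ and $\sum_i X_i^2/\lambda_i=0$.

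Next I would record the Darboux frame along $\partial\Sigma$. Because $\eta=X$ on the boundary and $\Sigma$ is minimal, the orthonormal frame $\{X,\tau,\nu\}$ (with $\tau$ the unit tangent to $\partial\Sigma$) has geodesic curvature $-1$, vanishing geodesic torsion, and normal curvature $\h(\tau,\tau)=-\h(\eta,\eta)=f$, so that $X'=\tau$, $\tau'=-X+f\nu$, $\nu'=-f\tau$ in arclength. Differentiating $fX_i=\lambda_i\nu_i$ and substituting $\nu'=-f\tau$ gives $f'X_i+f(1+\lambda_i)\tau_i=0$; contracting with $\tau$ and using $X\cdot\tau=0$, $|\tau|=1$, and $f\neq 0$ produces the clean identity $\sum_i\lambda_i\tau_i^2=-1$. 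Since $\{X,\tau,\nu\}$ is an orthonormal basis of $\R^3$ one has $X_i^2+\tau_i^2+\nu_i^2=1$ for each $i$; taking the $\lambda_i$-weighted sum of this and inserting $\sum_i\lambda_i\nu_i^2=0$ (already derived from (\ref{eigenequation})) and $\sum_i\lambda_i\tau_i^2=-1$ gives the third identity $\sum_i\lambda_i X_i^2=\sum_i\lambda_i+1$.

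These three identities present $(X_1^2,X_2^2,X_3^2)$ as the solution of a linear system with coefficient rows $(1,1,1)$, $(\lambda_i^{-1})$, $(\lambda_i)$ and \emph{constant} right-hand side; its determinant is of Vandermonde type and is nonzero precisely when the $\lambda_i$ are pairwise distinct. Hence if the $\lambda_i$ were all distinct, each $X_i^2$ would be constant on $\partial\Sigma$, forcing $X_i\tau_i=\tfrac12(X_i^2)'=0$ for all $i$ and thus $\tau\equiv 0$, contradicting $|\tau|=1$. Therefore at least two eigenvalues coincide; after permuting coordinates we may assume $\lambda_1=\lambda_2$. To convert this into symmetry, consider the normal component of the rotation field about the $e_3$-axis, $u:=X_1\nu_2-X_2\nu_1$, which satisfies $Ju=0$ since ambient rotations are Killing. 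Using $\nu_i=(f/\lambda_i)X_i$ and $\lambda_1=\lambda_2$ gives $u\equiv 0$ on $\partial\Sigma$, while $D_\eta u=(1+\lambda_2)X_1\nu_2-(1+\lambda_1)X_2\nu_1=(1+\lambda_1)u=0$ there as well (using $D_\eta X_i=X_i$ and $D_\eta\nu_i=\lambda_i\nu_i$). Thus $u$ has vanishing Cauchy data on the analytic boundary and solves the Jacobi equation, so by unique continuation $u\equiv 0$ on $\Sigma$. The vanishing of the normal part of the rotational Killing field means it is everywhere tangent to $\Sigma$, i.e.\ $\Sigma$ is rotationally symmetric about the $e_3$-axis.

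The main obstacle, I expect, is the third paragraph: recognizing that the orthonormal-frame identity $X_i^2+\tau_i^2+\nu_i^2=1$, once weighted by the eigenvalues, supplies exactly the extra linear relation needed to over-determine $(X_i^2)$, and verifying that the resulting Vandermonde-type determinant degenerates precisely at coincident eigenvalues. One should also check that the standing hypothesis $\lambda_i\neq 0$ excludes the borderline cases (a constant $f$, or an equatorial/cone configuration), since each of these would force some $\nu_i\equiv 0$ and hence a vanishing eigenvalue.
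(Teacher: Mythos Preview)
Your argument is correct and follows a genuinely different route from the paper's.

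The paper works with the squaring map $F(s)=(s_1^2,s_2^2,s_3^2)$: the two relations $\sum X_i^2=1$ and $\sum X_i^2/\lambda_i=0$ force $F(\partial\Sigma)$ to lie on a line in the plane $x+y+z=1$, and then an explicit curvature computation (Lemma~\ref{computecurvature} in the Appendix) is compared with the identity $\kappa^2=1+f^2=1+1/(Pt+Q)$ coming from (\ref{inversefsquare}) to deduce that the line parameter $A$ vanishes, hence some $X_i$ is constant along each boundary component, hence the boundary is round and rotational symmetry follows by unique continuation. Your approach replaces that curvature computation entirely: differentiating $fX_i=\lambda_i\nu_i$ along the Darboux frame supplies the extra relation $\sum\lambda_iX_i^2=1+\sum\lambda_i$, and the Vandermonde-type determinant of the three relations forces two eigenvalues to coincide. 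You then finish with the rotational Killing field $u=X_1\nu_2-X_2\nu_1$, which has vanishing Cauchy data on $\partial\Sigma$ and hence vanishes identically by the same unique continuation principle the paper already invokes in Lemma~\ref{onelambdaiszero}. This is more elementary (no Appendix computation) and arguably cleaner, since the Killing-field step yields rotational symmetry directly about a definite axis rather than via roundness of individual boundary components.

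One small point of exposition: the inference ``$X_i\tau_i=0$ for all $i$, hence $\tau\equiv0$'' deserves a word. What you actually have is that each $X_i^2$ is a \emph{constant} $c_i$ along the component; if $c_i>0$ then $X_i$ never vanishes, so $\tau_i=0$ there, while if $c_i=0$ then $X_i\equiv0$ and again $\tau_i=X_i'=0$. Either way $\tau\equiv0$, contradicting $|\tau|=1$. With that clarification the argument is complete.
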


\begin{proof}
First, we observe that the assumption implies $f^2>0$ since otherwise, by equation (\ref{fsquare}), $\lambda_i=0$ for some $i$. Next, we consider the map $F: \mathbb{S}^2=\partial \mathbb{B}^3\mapsto \mathbb{R}^3$ given by 
\[F(s_1,...,s_3)=(s_1^2,...,s_3^2).\]
It is obvious that $F$ is a local diffeomorphism around each point not on the coordinate planes. Furthermore, we have that
\begin{align}
\frac{X_i}{\lambda_i} &= \frac{\nu_i}{f},\\
\frac{X_i^2}{\lambda_i} &= \frac{\nu_i\cdot X_i}{f}=0,\\
\label{inversefsquare}
\sum_i\frac{X_i^2}{\lambda_i^2} &= \frac{1}{f^2}.
\end{align} 

Thus, $F(\partial \Sigma)$ is in the plane $x+y+z=1$ and also perpendicular to the constant vector $\vec{\lambda}=(\frac{1}{\lambda_1},...,\frac{1}{\lambda_3})$. By Lemma \ref{equalthen0}, the assumption $\lambda_i\neq 0 ~~\forall i$ implies that $\vec{\lambda}$ is not perpendicular to the plane $x+y+z=1$. Thus, each connected component of  $F(\partial \Sigma)$ is a line segment. Thus, without loss of generality, choose a point $p\in \partial \Sigma_0$ not on any coordinate plane then around $F(p)$, for some parameter $t$,
\[F(\partial \Sigma) = (a_1 t+ b_1,..., a_3 t+b_3).\]    
Then, in a neighborhood of $p$, 
\begin{align*}
X_i &= \sqrt{a_i t +b_i}.
\end{align*}
Thus, by Lemma \ref{computecurvature}, the curvature of $\partial \Sigma$ is given by
\begin{equation}
\label{firsteqkappa}
\kappa^2 = 1+\frac{C}{(At+B)^3}.
\end{equation}
On the other hand, 
\[ \kappa^2=\kappa_g^2+\kappa_n^2,\]
where $\kappa_g$ and $\kappa_n$ are the geodesic and normal curvature of the curve with respect to $\partial \mathbb{B}^3$. In our case, $\partial \Sigma$ can be parametrized by arc length with tangent vector $T$. Then 
\begin{align*}
\kappa_g &=D_T T \cdot \nu\\
& =\h(T,T)=-\h(\eta, \eta),\\
\kappa_n &=D_T T\cdot X=\h^{\partial \mathbb{B}^3}(T,T)=-1. 
\end{align*}
Therefore, using equation (\ref{inversefsquare}) yields
\begin{equation}
\label{2ndeqkappa}
\kappa^2 = 1+f^2= 1+\frac{1}{Pt+Q}.
\end{equation}
Comparing (\ref{firsteqkappa}) and (\ref{2ndeqkappa}), we obtain that $A=0$. Thus, Lemma \ref{computecurvature} implies that $a_i=0$ for some $i$. As a consequence, $X_i$ must be constant along each connected component of $\partial \Sigma$. Thus, each connected component of $\partial \Sigma_0$ must be the intersection of a plane and $\partial \mathbb{B}^3$ and, thus, round. The result then follows. 
\end{proof}
The next theorem characterizes an FBMS whose normal components are eigenfunctions of the Jacobi-Steklov problem. 
\begin{theorem}
\label{mainthm}
Let $\Sigma\subset \mathbb{B}^3$ be a smooth, properly immersed FBMS such that each component of a normal vector is an eigenfunction of the Jacobi-Steklov problem. Then $\Sigma$ must be rotationally symmetric. 
\end{theorem}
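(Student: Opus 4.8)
The plan is to obtain Theorem \ref{mainthm} as a short deduction from the lemmas already in hand, via a dichotomy on the eigenvalues $\lambda_i$. First I would record that, because $\Sigma$ is smooth and minimal, each normal component $\nu_i$ is automatically a Jacobi field, $J\nu_i=0$. Consequently the hypothesis that every $\nu_i$ is a Jacobi--Steklov eigenfunction adds exactly one condition per component, namely $D_\eta \nu_i=\lambda_i\nu_i$; that is, assumption (\ref{eigenequation}) holds with scalars $\lambda_i$. This is the precise input required by Lemmas \ref{onelambdaiszero} and \ref{nolambdaiszero}, so no further preparation is needed.

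Next I would argue by cases on whether some eigenvalue vanishes. If $\lambda_i\neq 0$ for all $i$, then Lemma \ref{nolambdaiszero} applies directly and already gives that $\Sigma$ is rotationally symmetric. If instead $\lambda_i=0$ for some $i$, then Lemma \ref{onelambdaiszero} forces $\Sigma$ to be a minimal cone; here I would invoke the standing smoothness hypothesis. Since a two-dimensional minimal cone in $\mathbb{B}^3$ is the cone over a closed geodesic of $\mathbb{S}^2$, i.e.\ a great circle, it is a plane through the origin, namely the equatorial disk, while any cone carrying a genuine vertex singularity is excluded by smoothness. The equatorial disk is rotationally symmetric, so the conclusion holds in this case as well. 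The two cases are plainly exhaustive, which completes the argument.

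I would emphasize that the substantive work is not in this reduction but entirely inside Lemma \ref{nolambdaiszero}, where the two expressions (\ref{firsteqkappa}) and (\ref{2ndeqkappa}) for the curvature of $\partial\Sigma$ are compared to force $A=0$. At the level of the theorem the only points demanding care are confirming that the dichotomy is complete and that the minimal-cone alternative collapses to the equatorial disk once smoothness is imposed; both are immediate given the lemmas. A final sanity check I would run is that the eigenfunction hypothesis really produces a single constant $\lambda_i$ per coordinate and not a more general boundary relation, which is guaranteed by the prior identity $J\nu_i=0$ together with the definition of the Jacobi--Steklov eigenvalue problem.
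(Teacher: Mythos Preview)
Your proposal is correct and follows essentially the same route as the paper: split into the cases $\lambda_i\neq 0$ for all $i$ (handled by Lemma~\ref{nolambdaiszero}) and $\lambda_i=0$ for some $i$ (handled by Lemma~\ref{onelambdaiszero}), then observe that in $\mathbb{B}^3$ the resulting minimal cone is over a great circle and hence is a smooth equatorial disk. The paper's proof is terser but identical in substance; your added remark that $J\nu_i=0$ makes the eigenfunction hypothesis equivalent to (\ref{eigenequation}) is a helpful clarification.
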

\begin{proof} For $\Sigma\subset \mathbb{B}^3$, $\partial \Sigma$ is minimal inside $\partial \mathbb{B}^3$ if and only if it is a multiple of the equator. Thus, the minimal cone is actually smooth and rotationally symmetric. The result then follows from Lemma \ref{onelambdaiszero} and Lemma \ref{nolambdaiszero}.
\end{proof}

We are now ready to prove the main result. 
\begin{proof} (\textbf{of Theorem \ref{mainthm1}}) Let $V=\text{span}(\nu_1,...,\nu_3)$ and we assume that $\text{dim}(V)=3$. Otherwise, by Lemma \ref{dimV}, $\Sigma$ must be flat and we are done. 

By Lemma \ref{choosebasis}, $V$ has a basis of eigenfunctions $\{u_1,...,u_3\}$ such that, for $i\neq j$, 
\[S(u_i, u_j)=0.\]
Furthermore, by rescaling if necessary, we have $\forall i$,
\begin{align*}
u_i &= a_{ij}\nu_j:= \nu\cdot a_i,\\
|a_i|&=1. 
\end{align*}
Consider the matrix 
\[M(e_i,e_j) := S(\nu_i,\nu_j).\]
It is clear that $M$ is a symmetric matrix which is negative definite. In particular, it has 3 eigenvectors and 3 negative eigenvalues. For $i\neq j$,
\[M(a_i, a_j)=S(\nu\cdot a_i, \nu\cdot a_j)=0\]
Thus the $a_i$'s are eigenvectors of $M$ and, as a result, for $i\neq j$,
\[a_i\cdot a_j=0\]
Using $\{a_1,...,a_3\}$ as the new basis of $\mathbb{R}^3$ and applying Theorem \ref{mainthm} yield the result. 

\end{proof} 
\begin{remark}
The analogous problem in higher dimensions, classifying free boundary minimal hypersurface $\Sigma^n\subset \mathbb{B}^{n+1}$ satisfying (\ref{eigenequation}), is still open. Note that our proof for $\Sigma^2\subset \mathbb{B}^3$ depends crucially on the computation of the curvature of a curve in space. In higher dimensions, the existence of minimal cones which are not rotationally symmetric indicates the intricacy of the situation.  
\end{remark}
\section{Appendix}
In this section, we study the map $F: \mathbb{S}^2\mapsto \mathbb{R}^3$ given by 
\[F(s_1,...,s_3)=(s_1^2,...,s_3^2).\]
It is noted that $F$ maps the unit sphere onto the triangle obtained by the intersection of the plane $x+y+z=1$ and the cone $\{x\geq 0, y\geq 0, z\geq 0\}$. Also $F$ is a local diffeomorphism around each point not on the coordinate planes. 

Here we are mostly interested in the case when the image of a curve on $\mathbb{S}^2$ is a line segment. As a consequence, locally the curve has the following parametrization. Let $I$ be an interval on the real line and, for $t\in I$ let $\gamma(t)=(\sqrt{a_1 t+b_1},...,\sqrt{a_3 t+b_3})$ be a curve such that 
\begin{align}
\label{sumai}
\sum_ i a_i &=0,\\
\label{sumbi}
\sum_i b_i&=1.
\end{align}
The following computation is crucial in the proof of Lemma \ref{nolambdaiszero}.
\begin{lemma}
\label{computecurvature}
The curvature of $\gamma(t)$ is given by
\[ \kappa(t)^2 =1+\frac{C}{(At+B)^3},\]
for some constant $A, B, C$. 
\end{lemma}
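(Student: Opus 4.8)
The plan is to compute the curvature of the space curve $\gamma(t)=(\sqrt{a_1t+b_1},\sqrt{a_2t+b_2},\sqrt{a_3t+b_3})$ directly from the standard formula for a curve in $\mathbb{R}^3$,
\[
\kappa^2=\frac{|\gamma'|^2|\gamma''|^2-(\gamma'\cdot\gamma'')^2}{|\gamma'|^6},
\]
and then to simplify the resulting rational function in $t$ using the two constraints (\ref{sumai}) and (\ref{sumbi}). Writing $\ell_i(t):=a_it+b_i$ so that $\gamma_i=\ell_i^{1/2}$, I would first record the derivatives $\gamma_i'=\tfrac{a_i}{2}\ell_i^{-1/2}$ and $\gamma_i''=-\tfrac{a_i^2}{4}\ell_i^{-3/2}$. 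The key point I expect to exploit is that $\gamma$ lies on the unit sphere, i.e. $\sum_i\gamma_i^2=\sum_i\ell_i(t)=(\sum_i a_i)t+\sum_i b_i=1$ by (\ref{sumai}) and (\ref{sumbi}); differentiating this relation gives $\gamma\cdot\gamma'=0$ and $|\gamma'|^2+\gamma\cdot\gamma''=0$, which are exactly the kinds of identities that collapse the curvature expression.

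Concretely, I would compute the three scalar quantities appearing in the formula. We have $|\gamma'|^2=\tfrac14\sum_i a_i^2/\ell_i$, and $\gamma'\cdot\gamma''=-\tfrac18\sum_i a_i^3/\ell_i^2$, while $|\gamma''|^2=\tfrac{1}{16}\sum_i a_i^4/\ell_i^3$. The goal is to show that the numerator $|\gamma'|^2|\gamma''|^2-(\gamma'\cdot\gamma'')^2$, once placed over the common denominator $\prod_i\ell_i^3$ (or a suitable power), produces a polynomial in $t$ whose degree drops enough that $\kappa^2$ takes the asserted form $1+C/(At+B)^3$. The constant term $1$ should emerge naturally because for a unit-speed-adjacent curve on the sphere the leading behavior of $\kappa^2$ is governed by the normal curvature of $\mathbb{S}^2$, which is $1$; more precisely, decomposing $\gamma''$ into its component along $\gamma$ (the position/normal direction on the sphere) and its tangential component, the normal part contributes exactly the $1$ and the tangential part contributes the geodesic-curvature term that becomes the $C/(At+B)^3$ correction.

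The main obstacle I anticipate is the bookkeeping in showing that the apparently complicated rational function in the three independent denominators $\ell_1,\ell_2,\ell_3$ actually simplifies to a single cube $(At+B)^3$ in the denominator rather than a product of distinct cubes. This simplification must be forced by the linear constraint $\sum_i a_i=0$: I would substitute $a_3=-a_1-a_2$ (and keep $b_3=1-b_1-b_2$) to eliminate one variable and then verify that the symmetric combinations appearing in the numerator and denominator reorganize so that all the cross terms combine into powers of the single affine function $At+B$. A clean way to manage this is to exploit the sphere identity repeatedly: since $\sum_i\ell_i\equiv 1$, any symmetric polynomial in the $\ell_i$ can be re-expressed, and in particular I would try to show that the quantity $At+B$ is (up to constants) $\sum_i a_i^2/\ell_i$ times a suitable factor, i.e. that $|\gamma'|^2$ itself is comparable to $1/(At+B)$, which is consistent with equation (\ref{2ndeqkappa}) in the proof of Lemma \ref{nolambdaiszero} where $f^2=1/(Pt+Q)$.

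Once the algebraic identity is established, the proof concludes immediately: the computation exhibits explicit constants $A,B,C$ (depending on the $a_i,b_i$) for which $\kappa(t)^2=1+C/(At+B)^3$ holds on the interval $I$, which is precisely the claim. I would relegate the fully expanded arithmetic to the displayed computation, presenting only the key cancellations driven by (\ref{sumai}) and (\ref{sumbi}), since the statement is called a straightforward but crucial computation in the introduction.
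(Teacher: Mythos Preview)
Your plan is correct and shares the paper's overall strategy---compute $\kappa^2$ from the standard space-curve formula and simplify using (\ref{sumai}) and (\ref{sumbi})---but the execution differs in one useful way. The paper works with the cross-product form $\kappa=\|\gamma'\times\gamma''\|/\|\gamma'\|^3$, computes each component of $\gamma'\times\gamma''$ explicitly, and then establishes the key identity
\[
4\|\gamma'\|^2=\frac{At+B}{\ell_1\ell_2\ell_3},\qquad A=-a_1a_2a_3,\quad B=a_1^2b_2b_3+a_2^2b_1b_3+a_3^2b_1b_2,
\]
so that $\kappa^2$ becomes a cubic in $t$ divided by $(At+B)^3$; the constant $1$ is then obtained by a brute-force verification that the first three coefficients of that cubic are exactly $A^3,\,3A^2B,\,3AB^2$. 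Your proposed sphere decomposition $\gamma''=-|\gamma'|^2\gamma+\gamma''_{\mathrm{tan}}$ yields the $1$ more transparently (since $\gamma'\times\gamma$ and $\gamma'\times\gamma''_{\mathrm{tan}}$ are orthogonal and $|\gamma'\times\gamma|=|\gamma'|$), sparing you that three-coefficient check. You still need the displayed identity for $\|\gamma'\|^2$ to finish the residual term, and here the paper's symmetric manipulation of the constraints is cleaner than your suggested substitution $a_3=-a_1-a_2$; but either bookkeeping choice works.
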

\begin{proof}
Recall that 
\[\kappa =\frac{||\gamma'\times \gamma''||}{||\gamma'||^3}.\]
We compute:
\begin{align*}
2\gamma'(t) &= (\frac{a_1}{\sqrt{a_1 t+b_1}},...,\frac{a_3}{\sqrt{a_3 t+b_3}}),\\
-4\gamma''(t) &= (\frac{a_1^2}{(a_1 t+b_1)^{3/2}},...,\frac{a_3}{(a_3 t+b_3)^{3/2}}). 
\end{align*}
Thus, 
\begin{align*}
-8 \gamma'\times \gamma'' =& \Big( \frac{a_2 a_3(a_3b_2-a_2b_3)}{(a_2t+b_2)^{3/2}(a_3t+b_3)^{3/2}}, \frac{a_3 a_1(a_1b_3-a_3b_1)}{(a_1t+b_1)^{3/2}(a_3t+b_3)^{3/2}},\\
& \frac{a_1 a_2(a_2b_1-a_1b_2)}{(a_2t+b_2)^{3/2}(a_1t+b_1)^{3/2}}\Big)
\end{align*}
For
\begin{align*}
A &:= -a_1a_2a_3,\\
B &:= a_1^2 b_2b_3+a_2^2b_1b_3+a_3^2 b_1b_2,
\end{align*}
using equations (\ref{sumai}) and (\ref{sumbi}) yields
\begin{align*}
4||\gamma'(t)||^2 &=\frac{a_1^2(a_2t+b_2)(a_3t+b_3)+a_2^2(a_1t+b_1)(a_3t+b_3)+a_3^2(a_1t+b_1)(a_2t+b_2)}{(a_1t+b_1)(a_2t+b_2)(a_3t+b_3)}\\
&= \frac{A t + B}{(a_1t+b_1)(a_2t+b_2)(a_3t+b_3)}.
\end{align*}
Therefore,
\begin{align*}
\kappa^2 &= \frac{A_1 t^3+ A_2 t^2+A_3 t+ A_4}{(At+B)^3}.
\end{align*}
The coefficients $A_1,..., A_4$ are computed below. Note that we repeatedly make use of equations (\ref{sumai}) and (\ref{sumbi}). 
\begin{align*}
A_1 &= a_2^2a_3^2(a_3b_2-a_2b_3)^2 a_1^3+ a_3^2a_1^2(a_1b_3-a_3b_1)^2 a_2^3+ a_1^2a_2^2(a_2b_1-a_1b_2)^2 a_3^3\\
&=A^2 \Big(a_1(a_3b_2-a_2b_3)^2+a_2(a_1b_3-a_3b_1)^2+a_3(a_2b_1-a_1b_2)^2\Big)\\
&=A^3(b_1+b_2+b_3)^2\\
&=A^3.
\end{align*}
Next,
\begin{align*}
A_2 &=3a_2^2a_3^2(a_3b_2-a_2b_3)^2 a_1^2b_1+ 3a_3^2a_1^2(a_1b_3-a_3b_1)^2 a_2^2b_2+3a_1^2a_2^2(a_2b_1-a_1b_2)^2 a_3^2b_3\\
&=3A^2 \Big(b_1(a_3b_2-a_2b_3)^2+b_2(a_1b_3-a_3b_1)^2+b_3(a_2b_1-a_1b_2)^2\Big)\\
&=3A^2\Big(a_1^2 b_2b_3(b_2+b_3)+a_2^2 b_1b_3(b_1+b_3)+a_3^2b_1b_2(b_1+b_2)\\
&-2b_1b_2b_3(a_1a_2+a_2a_3+a_3a_1)\Big)\\
&=3A^2 B. 
\end{align*}
Next,
\begin{align*}
A_3 &=3a_2^2a_3^2(a_3b_2-a_2b_3)^2 a_1b_1^2+ 3a_3^2a_1^2(a_1b_3-a_3b_1)^2 a_2b_2^2+3a_1^2a_2^2(a_2b_1-a_1b_2)^2 a_3b_3^2\\
&=-3A\Big(b_1^2a_2a_3(a_3b_2-a_2b_3)^2+b_2^2 a_1a_3(a_1b_3-a_3b_1)^2+b_3^2a_1a_2(a_2b_1-a_1b_2)^2\Big)\\
&=-3A\Big(b_1^2b_2^2a_3^3(a_1+a_2)+b_1^2b_3^2a_2^3(a_1+a_3)+b_2^2b_3^2a_1^3(a_2+a_3)\\
&-2b_1b_2b_3(b_3a_1^2a_2^2+b_1a_2^2a_3^2+b_2a_3^2a_1^2)\Big)\\
&=3A B^2. 
\end{align*}
The result then follows for
\begin{align*}
C &:=A_4-B^3,\\
A_4 &=a_2^2a_3^2(a_3b_2-a_2b_3)^2 b_1^3+ a_3^2a_1^2(a_1b_3-a_3b_1)^2 b_2^3+a_1^2a_2^2(a_2b_1-a_1b_2)^2 b_3^3.
\end{align*}
\end{proof}

\begin{remark} By direct computation, $C=0$ if and only if $\gamma(t)$ is a great circle perpendicular to a coordinate plane. Also $\kappa(t)^2$ is a constant if and only if $\gamma(t)$ is a circle perpendicular to a coordinate plane. In other words, other circles do not map to a line segment.
\end{remark}


\def\cprime{$'$}
\bibliographystyle{plain}
\bibliography{bio}

\end{document}